\newtheorem{theorem}{Theorem}
\newtheorem{remark}[theorem]{Remark}
\newtheorem{definition}[theorem]{Definition}
\newcommand{\SL}{\textrm{SL}}
\newcommand{\Z}{{\mathbb Z}}
\newcommand{\R}{{\mathbb R}}
\begin{document}
\baselineskip=15.5pt
\title{Twisted conjugacy in fundamental groups of geometric $3$-manifolds}  
\author{Daciberg Gon\c calves}
\address{Instituto de Matem\'atica e Estat\'istica da Universidade de S\~ao Paulo\\ Departamento de Matem\'atica\\Rua do Mat\~ao, 1010 CEP 05508-090\\S\~ao Paulo-SP, Brasil}
\email{dlgoncal@ime.usp.br}
\author{Parameswaran Sankaran}
\address{Chennai Mathematical Institute , H1 SIPCOT IT Park, Siruseri, Kelambakkam, 603103, India}
\email{sankaran@cmi.ac.in}
\author{Peter Wong}
\address{Department of Mathematics, Bates College, Lewiston, Maine, USA}
\email{pwong@bates.edu}
\thanks{The first author is partially supported by Projeto Tem\'atico-FAPESP Topologia Alg\'ebrica, Geom\'etrica e Diferencial 2016/24707-4 (S\~ao
Paulo-Brazil). The first and third authors thank the IMSc (August 2018) and the CMI, Chennai (December 2019), for their support during their visits. Second and third authors thank the IME-USP, S\~ao Paulo for its support during the authors' visit in February 2019.} 
\subjclass[2010]{20E45, 20E36, 57M20, 55M20}
\keywords{Twisted conjugacy,  geometric three-manifolds, Nielsen fixed point theory}

\date{\today}

\begin{abstract} A group
 $G$ has the $R_{\infty}$-property if for every $\varphi \in {\rm Aut}(G)$, there are an infinite number of $\varphi$-twisted conjugacy classes of elements in $G$. In this 
 note, we determine the $R_{\infty}$-property for $G=\pi_1(M)$ for all geometric $3$-manifolds $M$.  
\end{abstract}
\maketitle

\section{Introduction}
Let $M$ be a closed connected $n$-manifold and $f:M\to M$ a selfmap. Classical Nielsen fixed point theory is concerned with the minimal number of fixed points among all maps homotopic to $f$, i.e., the number $MF[f]:=\min_{g\sim f}\{\#({\rm Fix}g=\{x\in M \mid g(x)=x\})\}$. If $n\ge 3$, a classical theorem of Wecken asserts that $MF[f]=N(f)$, the Nielsen number of $f$. For $n=2$, the difference $MF[f]-N(f)$ can be arbitrarily large. For $n\ge 3$, the computation of $N(f)$ is a central issue but is very difficult in general. When $M$ is a Jiang-type space, for instance a generalized lens space, an orientable coset space of a compact connected Lie group, a spherical space form, or a nilmanifold, either $N(f)=0$ or $N(f)=R(f)$, the Reidemeister number of $f$. If $\varphi$ is the induced homomorphism of $f$ on $\pi_1(M)$, $R(f)=R(\varphi)$, the cardinality of the set of $\varphi$-twisted conjugacy classes of elements in $\pi_1(M)$. In such a situation, if $R(f)=\infty$ then $N(f)=0$ which implies that $f$ is homotopic to a fixed point free map. For example, for any $n\ge 5$, there exists an $n$-dimensional nilmanifold $M$ such that every homeomorphism $f:M\to M$ is isotopic to a fixed point free map \cite{GW2}. This result is a consequence of the $R_{\infty}$-property of $\pi_1(M)$ for certain nilmanifolds $M$. Recall that a group $G$ has the $R_{\infty}$-property if for every $\varphi \in {\rm Aut}(G)$, the set of orbits of the left action $\sigma \cdot \alpha \mapsto \sigma \alpha \varphi(\sigma)^{-1}$ is infinite.
It is therefore natural to ask for what families of $n$-manifolds $M$ does $\pi_1(M)$ have the $R_{\infty}$-property. For $n=3$, the Thurston-Perelman Geometrization Theorem asserts that every closed $3$-manifold is made up of finite pieces of $3$-manifolds equipped with geometries of the following eight types: (I)  $S^3$ (Spherical); (II)  $S^2\times \mathbb R$; (III)  $\mathbb E^3$ (Euclidean); (IV)  Nil; (V) $\widetilde {SL(2,\mathbb R)}$; (VI) $\mathbb H^2\times \mathbb R$; (VII)  Sol; (VIII)  $\mathbb H^3$ (Hyperbolic). 
By a {\it geometric} $3$-manifold, we mean a connected $3$-manifold equipped with a geometry from (I) - (VIII) with finite volume (see \cite{Wi}).  
It turns out that a geometric $3$-manifold is compact except in case 
of the following geometries where the manifold can be either compact or non-compact: 
$\widetilde{SL(2,\mathbb R)}, \mathbb H^2\times \mathbb R, \mathbb H^3$. 

The main objective of this note 
is to determine whether the fundamental group of a geometric $3$-manifold has the $R_{\infty}$-property. 
Leaving out the case of spherical geometry where the fundamental group is finite, our main result is the following:\\

\noindent
{\bf Main Theorem.} {\em Let $M$ be a geometric $3$-manifold with infinite fundamental group.  Then  $\pi_1(M)$ has the $R_\infty$-property when $M$ has any of the following geometries: $\widetilde{SL(2,\mathbb R)}, \mathbb H^2\times \mathbb R, \mathbb H^3$.  In the remaining cases, $\pi_1(M)$ has the $R_\infty$-property with the following exceptions:\\
(a) $S^2\times S^1$-{\em geometry}:~ $M\cong S^2\times S^1, S^2\tilde{\times} S^1, \mathbb RP^2\times S^1$,\\
(b) $\mathbb E^3$-{\em geometry}: The orientable manifolds with holonomy group $\{1\}, \mathbb Z_2,\mathbb Z_2\times \mathbb Z_2$,\\
(c) $Nil$-{\em geometry}:  
The circle bundles over the torus $S^1\times S^1$ with non-zero 
Euler class $k$, and, Seifert fibre spaces with base the sphere $S^2$ having four singular points of type $(2,1)$ and holonomy group $\mathbb Z_2$. \\
(d) $Sol$-{\em geometry}:   The manifolds having this geometry are of two kinds: $M_0=T\times I/\sim $,  where the 
boundary tori are glued via a Anosov diffeomorphism;  $M_1= E_0\sqcup E_1/\sim $, where $E_0, E_1$ are twisted $I$-bundles over the Klein bottle and their boundary tori are glued via an Anosov diffeomorphism. \\
(1) The group $\pi_1(M_0)=G=\mathbb Z^2\rtimes_\theta \mathbb Z$ where the $\mathbb Z$-action $\theta$ on $\mathbb Z^2$ is given by an Anosov matrix $A\in SL(2,\mathbb Z)$.  Then $G$ has the 
$R_\infty$-property if and only if any of the following holds: (i) $\det(A)=-1$, (ii) $A, A^{-1}\in SL(2,\mathbb Z)$ are not conjugates in $GL(2,\mathbb Z)$, (iii) $A,A^{-1} $ are conjugates in $ GL(2,\mathbb Z)$ but are {\em not}  conjugate to a matrix of the form $\bigl( \begin{smallmatrix}  r & s\\ s& u \end{smallmatrix}\bigr)$, and, furthermore,  neither $A$ nor $-A$ equals $X^p$ for some $p\in \mathbb Z$ and $X\in GL(2,\mathbb Z)$ with $\det(X)=-1$. \\
(2)  The group $\pi_1(M_1)$ has the $R_\infty$-property.  }


We have a related notion of a manifold possessing the $R_\infty$-property.   
\begin{definition}\label{rspace}
We say that a manifold $M$ has the $R_\infty$-{\em property} if, for every self-homotopy equivalence $f:M \to M$,
 the Reidemeister number of the automorphism $f_\#:\pi_1(M)\to \pi_1(M)$ is infinite. 
 \end{definition}

Note that when $M$ is an aspherical space $K(\pi,1)$, the topological and the algebraic notions of the $R_\infty$-properties coincide, that is, an aspherical  
manifold $M$ has the $R_\infty$-property if and only if the group $\pi_1(M)=\pi$ has the $R_\infty$-property.  
Other than spherical- and  $S^2\times \mathbb R$-geometries, the universal covers of the remaining geometric $3$-manifolds are diffeomorphic to 
$\R^3$ and so they are aspherical.  Since the fundamental groups of manifolds admitting spherical geometry are finite, the two notions trivially coincide.   We will show 
that in the remaining case of $S^2\times \mathbb R$-geometry also, the two notions agree.  In general, however, examples of smooth compact manifolds are 
known which have the $R_\infty$-property but their fundamental groups do not. (See the Appendix.)
We remark here that, in the case of $3$-manifolds, the Borel conjecture is known to be valid: if $M,M'$ are closed aspherical $3$-manifolds, 
any isomorphism $\phi:\pi_1(M)\to \pi_1(M')$ is induced by a {\it homeomorphism} $f:M\to M'$ (see \cite[\S2.1]{As}).

The proof of the Main Theorem will be spread over several sections, depending on the type of the geometry under consideration.  In many cases, the proof can be found or can be 
derived from results available in the literature.  But they are  
scattered in various papers and often do not specifically address the case of fundamental groups of geometric $3$-manifolds.  Specifically, 
the case of hyperbolic geometry follows from the work of Levitt and Lustig (for compact manifolds) and that of Fel'shtyn (for non-compact ones). In the cases of 
$\mathbb E^3$-, $S^2\times S^1$-, and $Sol$-geometries, the proof (for the most part) follows from the work of Gon\c{c}alves, Wong, and Zhao.   The result for $\mathbb E^3$-geometry was also obtained by Dekimpe and Penninckx, who considered  the more general case of three-dimensional crystallographic groups. 
The complete result in the case of Nil-geometry is due to Dekimpe \cite[Theorem 4.4]{De2}.   However, 
the results on $\mathbb H^2\times \mathbb R$- and $\widetilde{SL(2,\mathbb R)}$-geometries and the {\it complete} classification of manifolds admitting 
{\it Sol}-geometry whose fundamental groups do {\it not} have the $R_\infty$-property, could not be found in 
the literature.  

Our aim here is to present a coherent discussion of all the eight geometries, considering the importance of the role of the fundamental group in the study of $3$-manifolds.   For the $R_\infty$-property of fundamental groups of 
non-prime $3$-manifolds see \cite{GSW2}.

For the rest of the paper, we leave out the case of spherical geometry.

\section{Geometries $\mathbb H^3$ and $\mathbb E^3$}

\subsection{$\mathbb H^3$, the hyperbolic geometry}   The fundamental group of a compact hyperbolic $3$-manifold is known to be a (torsion-free) non-elementary word hyperbolic group. 
It follows from the main result of \cite{LL}  that these groups all have the $R_{\infty}$-property.  The fundamental group of a non-compact, finite volume hyperbolic $3$-manifold is relatively hyperbolic (with respect 
to the finite collection of fundamental groups at the cusps).  In this case, Fel'shtyn \cite{fel} has shown that such a group has the $R_\infty$-property (see also \cite{MS1} and \cite{MS2}). 
 

\subsection{$\mathbb E^3$, Euclidean or flat geometry}  In this case the $R_\infty$-property has been studied in a more general context.  Here we shall confine ourselves to the case of 
$3$-manifolds.  

 Any such manifold $M$ is a quotient $\mathbb R^3/\pi$  where $\pi$ is a torsion-free lattice  
in the group $Iso(\mathbb R^3)$ of isometries of  $\mathbb R^3$ and is finitely covered by the $3$-torus $\mathbb R^3/\mathbb Z^3$.  Thus $M$ is compact and the fundamental group $\pi$ of $M$ therefore admits a finite 
index subgroup isomorphic to 
$\mathbb Z^3$.  
 It turns out that $\pi$ has a {\it unique} maximal normal abelian group $\Gamma \cong \mathbb Z^3$. 
In particular, $\Gamma$ is characteristic in $\pi$.  It is the translation part of $\pi$.   
The group $\Phi:=\pi/\Gamma$, which is finite, is the {\it holonomy group} of $M$.   $\Phi$ acts on $\Gamma$ as automorphisms.  This is the same as the action of the deck transformation group of the covering $\mathbb R^3/\Gamma\to M$.   Thus we have an exact sequence in which $\Gamma\cong \mathbb Z^3$ is characteristic and $\Phi$, finite:

\begin{equation}  1\to \Gamma \to \pi \to \Phi\to 1.
\end{equation}

It is known that the fixed subgroup $\Gamma^\Phi$ equals the centre $Z(\pi)$ of $\pi$.    When $Z(\pi)\cong \mathbb Z$, the quotient $\pi/Z(\pi)$ is a planar crystallographic group $\Lambda$.  
Irrespective of the rank of $Z(\Gamma)$,  one has a projection of $\pi$ onto a planar 
crystallographic group $\Lambda$. Thus one has an exact sequence 
\begin{equation}
1\to Z\to \pi\to \Lambda\to 1. 
\end{equation}
However, only the case when $Z=Z(\pi)\cong\mathbb Z$ will be 
relevant for our purposes. 

When $M$ is non-orientable, it turns out that $M$ fibres over a circle with fibre the Klein bottle.  This results in an 
exact sequence 
\begin{equation} 1\to \pi_1(K)\to \pi \to \mathbb Z\to 1. \end{equation}

Using the notation of 
\cite{GWZ1}, up to diffeomorphism, there are a total of ten flat $3$-manifolds whose fundamental groups $\pi$ are listed below, 
where the first six are orientable and the remaining four are non-orientable.   We also indicate the holonomy 
group $\Phi$ and the centre $Z(\pi)$.  Whenever it is relevant for our purposes, we shall indicate the 
planar crystallographic group $\Lambda=\pi/Z(\pi)$ with $Z(\pi)\cong \mathbb Z$.   We will use 
the notation of Lyndon \cite{lyndon} for planar crystallographic groups.  

We denote the image of an element $\gamma\in \pi$ under the projection $\pi\to \Phi$ by $\bar{\gamma}$. 
\begin{enumerate}
\item [1.]  $\langle \alpha_1,\alpha_2,\alpha_3 | \ \alpha_i\alpha_j=\alpha_j\alpha_i, 1\le i,j \le 3 \rangle$ with holonomy $\Phi=\{1\}$.

\item [2.]  $\langle \alpha_1,\alpha_2,\alpha_3,t | \ \alpha_1=t^2, t\alpha_2t^{-1}=\alpha_2^{-1}, t\alpha_3t^{-1}=\alpha_3^{-1}, \alpha_i\alpha_j=\alpha_j\alpha_i, 1\le i,j \le 3 \rangle$ with holonomy $\Phi=\langle 
\bar t\rangle \cong \mathbb Z_2 $. $Z(\pi)=\langle \alpha_1\rangle$. $\Lambda=\pi/Z(\pi)\cong G_2$.

\item [3.]  $\langle \alpha_1,\alpha_2,\alpha_3,t | \ \alpha_1=t^3, t\alpha_2t^{-1}=\alpha_3, t\alpha_3t^{-1}=\alpha_2^{-1}\alpha_3^{-1}, \alpha_i\alpha_j=\alpha_j\alpha_i, 1\le i,j \le 3 \rangle$ with holonomy 
$\Phi=
\langle \bar t\rangle\cong 
\mathbb Z_3$. $Z(\pi)=\langle \alpha_1\rangle$.  $\Lambda=\pi/Z(\pi)\cong G_3$.

\item [4.]  $\langle \alpha_1,\alpha_2,\alpha_3,t | \ \alpha_1=t^4, t\alpha_2t^{-1}=\alpha_3, t\alpha_3t^{-1}=\alpha_2^{-1}, \alpha_i\alpha_j=\alpha_j\alpha_i, 1\le i,j \le 3 \rangle$ with holonomy 
$\Phi=\langle \bar t\rangle \cong \mathbb Z_4$.
$Z(\pi)=\langle \alpha_1\rangle$. $\Lambda=\pi/Z(\pi)\cong G_4$. $\Lambda=\pi/Z(\pi)\cong G_4$.

\item [5.]  $\langle \alpha_1,\alpha_2,\alpha_3,t | \ \alpha_1=t^6, t\alpha_2t^{-1}=\alpha_3, t\alpha_3t^{-1}=\alpha_2^{-1}\alpha_3, \alpha_i\alpha_j=\alpha_j\alpha_i, 1\le i,j \le 3 \rangle$ with holonomy 
$\Phi=\langle \bar t\rangle \cong \mathbb Z_6$.  $Z(\pi)=\langle \alpha_1\rangle$.  $\Lambda =\pi/Z(\pi)\cong G_6$.

\item [6.]  $\langle \alpha_1,\alpha_2,\alpha_3,t_1,t_2,t_3 | \ \alpha_1\alpha_3=t_3t_2t_1, \alpha_i=t_i^2, t_i\alpha_jt_i^{-1}=\alpha_j^{-1} \text{~for~}i\ne j, \alpha_i\alpha_j=\alpha_j\alpha_i, 1\le i,j \le 3 \rangle$ with holonomy $\Phi=\mathbb Z_2\times \mathbb Z_2$, generated by $\bar{t}_1,\bar{t}_2$.  $Z(\pi)=\{1\}$.

\item [7.]  $\pi_1(K) \times \mathbb Z=\langle \alpha,\beta|\beta\alpha\beta^{-1}=\alpha^{-1}\rangle \times \langle t \rangle$ where $K$ is the Klein bottle, with holonomy $\Phi=\mathbb Z_2$, generated by 
$\bar{\beta}$.   $Z(\pi)=gp \langle \beta^2, t\rangle\cong \mathbb Z^2$.  

\item [8.]  $\langle \alpha,\beta,t|\beta\alpha\beta^{-1}=\alpha^{-1}, t\alpha t^{-1}=\alpha, t\beta t^{-1}=\alpha\beta \rangle$ with holonomy $\Phi=\mathbb Z_2$ generated by $\bar{\beta}$.  $Z(\pi)=\langle \beta^2\rangle 
\cong \mathbb Z$. We have $gp\langle \alpha,\beta\rangle \cong \pi_1(K)$ and so 
$\pi=\pi_1(K)\rtimes \langle t\rangle$.

\item [9.]  $\langle \alpha,\beta,t|\beta\alpha\beta^{-1}=\alpha^{-1}, t\alpha t^{-1}=\alpha, t\beta t^{-1}=\beta^{-1} \rangle$ with holonomy $\Phi=\mathbb Z_2\times \mathbb Z_2$, generated by $\bar{t},\bar{\beta}$.   $Z(\pi)=\langle  t^2\rangle\cong \mathbb Z. $  We have $gp\langle \alpha,\beta\rangle = \pi_1(K)$ and so 
$\pi\cong \pi_1(K)\rtimes \langle t\rangle$. $\pi_1(K)$ is characteristic.

\item [10.] $\langle \alpha,\beta,t|\beta\alpha\beta^{-1}=\alpha^{-1}, t\alpha t^{-1}=\alpha, t\beta t^{-1}=\alpha\beta^{-1} \rangle$ with holonomy $\Phi=\mathbb Z_2\times \mathbb Z_2$, generated by $\bar{t},
\bar{\beta}$. $Z(\pi)=\langle t^2\rangle\cong \mathbb Z$.  We have $gp\langle \alpha,\beta\rangle 
=\pi_1(K)$ and $\pi=\pi_1(K)\rtimes \langle t\rangle$.   $\pi_1(K)$ is characteristic. 
\end{enumerate}

We now state the result concerning the $R_\infty$-property of these groups. 

\begin{theorem}
Let $\pi=\pi_1(M)$ where $M$ is a compact flat $3$-manifold. Then $\pi$ has the $R_\infty$-property if $\pi$ is isomorphic to one of the groups (3), (4), (5), (7), (8), (9) or (10).  
In the case when $\pi$ is isomorphic to the groups (1), (2), or (6), $M$ admits self-homeomorphisms with finite Reidemeister numbers. 
\end{theorem}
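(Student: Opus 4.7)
My plan is to dispatch the ten groups one at a time, separating them into the seven affirmative cases $(3),(4),(5),(7),(8),(9),(10)$ and the three negative cases $(1),(2),(6)$. The workhorse for the affirmative direction is the following standard descent lemma: \emph{if $N$ is a characteristic subgroup of $G$ and the quotient $G/N$ has the $R_\infty$-property, then so does $G$}. Its proof is one line: any $\varphi\in\aut(G)$ descends to $\bar\varphi\in\aut(G/N)$, the projection $G\to G/N$ maps $\varphi$-twisted classes onto $\bar\varphi$-twisted classes, and hence $R(\varphi)\ge R(\bar\varphi)=\infty$.

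For the affirmative cases, the central exact sequence $1\to Z(\pi)\to \pi\to \Lambda\to 1$ of (2) supplies exactly the setup for the lemma, since $Z(\pi)$ is characteristic. In $(3),(4),(5)$ one has $Z(\pi)\cong\mathbb{Z}$ and $\Lambda\cong G_k$ with $k\in\{3,4,6\}$; I would invoke the known fact (see \cite{GWZ1}) that each of $G_3, G_4, G_6$ has the $R_\infty$-property, a consequence of the rigidity forced on the translation lattice by a rotation of order at least three. In $(7)$ the center is $\mathbb{Z}^2$ generated by $\beta^2$ and $t$, and the quotient $\pi_1(K)/\langle\beta^2\rangle$ is the infinite dihedral group $D_\infty$, whose $R_\infty$-property is easily verified directly. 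For $(8),(9),(10)$ the center is again $\mathbb{Z}$ and the quotient is, in each case, a planar crystallographic group containing reflections; I would identify the quotient explicitly by killing the central generator in the given presentation (e.g.~in $(8)$ one obtains $\mathbb{Z}^2\rtimes\mathbb{Z}_2$ with $\mathbb{Z}_2$ acting by a reflection) and then cite the corresponding case of the classification of wallpaper groups with the $R_\infty$-property given in \cite{GWZ1}.

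For the three negative cases, I would exhibit explicit automorphisms of finite Reidemeister number. In $(1)$, with $\pi\cong\mathbb{Z}^3$, the inversion $\varphi=-\mathrm{Id}$ gives $R(\varphi)=|\det(-\mathrm{Id}-\mathrm{Id})|=8$. In $(2)$, eliminating the redundant generator $\alpha_1=t^2$ presents $\pi$ as a semidirect product $\mathbb{Z}^2\rtimes_{-I}\mathbb{Z}$, and the automorphism fixing $t$ and acting by $-\mathrm{Id}$ on the normal $\mathbb{Z}^2$ has finite Reidemeister number, which can be read off as a finite sum of determinants coming from the two cosets of the translation subgroup. In $(6)$, the Hantzsche--Wendt manifold group, a symmetric automorphism permuting the $t_i$'s and $\alpha_i$'s compatibly likewise has finite $R(\varphi)$; the details are worked out in \cite{GWZ1}.

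The main obstacle is not conceptual but organizational. The two points where genuine verification is needed are the identification of the correct wallpaper-group quotient in each of $(8),(9),(10)$ (to which one may then apply the existing $R_\infty$ classification), and the explicit finite-$R$ computations in $(2)$ and $(6)$, both of which reduce to routine semidirect-product calculations using the Anosov-type formula $R(\psi)=|\det(\psi-\mathrm{Id})|$ on each coset of the translation lattice.
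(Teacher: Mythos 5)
The most serious problem is your case (3). Your strategy there is to descend to $\Lambda=\pi/Z(\pi)\cong G_3$ (the wallpaper group $p3=\mathbb{Z}^2\rtimes_\rho\mathbb{Z}_3$, $\rho$ the order-three rotation) and invoke the ``known fact'' that $G_3$ has the $R_\infty$-property. That fact is false. The map fixing the generator of $\mathbb{Z}_3$ and acting on the translation lattice by $S=-\rho$ (the order-six rotation, which commutes with $\rho$) is a well-defined automorphism of $G_3$, and the addition formula over the finite quotient $\mathbb{Z}_3$ bounds its Reidemeister number by $|\det(I+\rho)|+|\det(I+\rho^{2})|+|\det(2I)|=1+1+4=6$. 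So the descent-to-the-planar-quotient argument cannot prove case (3); this is precisely why the paper treats (3) (together with (8)) not via $\Lambda$ but via the rank-three translation lattice $\Gamma\cong\mathbb{Z}^3$ of sequence (1), finding for each $\varphi$ a representative $g$ with $R(\iota_g\circ\varphi|_\Gamma)=\infty$ and then using finiteness of the holonomy. The same caution applies to your plan for (8)--(10): ``the quotient contains reflections'' is not by itself a reason for $R_\infty$ (the orientation-preserving wallpaper groups $G_1,G_2,G_3$ all fail it), and for (9),(10) the quotient has point group $\mathbb{Z}_2\times\mathbb{Z}_2$, so each quotient must be pinned down and checked against the classification --- which lives in \cite{GW5}, not \cite{GWZ1}. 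The paper instead uses the characteristic subgroup $\pi_1(K)$ for (9),(10) and the lattice $\Gamma$ for (8).

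Your example in case (2) is also wrong. The automorphism fixing $t$ and acting by $-I$ on $\langle\alpha_2,\alpha_3\rangle$ induces the \emph{identity} on the infinite cyclic quotient $\pi/\langle\alpha_2,\alpha_3\rangle=\langle\bar t\rangle\cong\mathbb{Z}$, so by the very descent principle you state at the outset $R(\varphi)\ge R(\mathrm{id}_{\mathbb{Z}})=\infty$; equivalently, its restriction to $\Gamma\cong\mathbb{Z}^3$ is $\mathrm{diag}(1,-1,-1)$ and $I-\varphi'$ is singular. A finite-Reidemeister automorphism of group (2) must invert the image of $t$, and since $\alpha_1=t^2$ generates the characteristic centre this forces a more delicate construction; the paper simply cites the explicit example in \S 7 of \cite{GWZ1}. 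Your cases (1), (4), (5), (6) agree with the paper, and your route for (7) through $\pi/Z(\pi)\cong D_\infty$ is a legitimate alternative to the paper's citation of the $R_\infty$-property of $\pi_1(K)\times\mathbb{Z}$.
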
   
 We merely outline the method of proof here, referring the reader to relevant papers for detailed proofs.  In case (1), the manifold $M$ is a torus and the assertion is well-known. 
 
 For Cases (4) and (5),  we use the exact sequence (2).  In these cases, $Z=Z(\pi)$ is characteristic.
Since $\pi$ projects onto a two dimensional crystallographic group $\Lambda=\pi/Z(\pi)$, which is isomorphic to 
$G_4$ and $G_6$ respectively, the  $R_{\infty}$-property of $\pi$ follows from the $R_\infty$-property of $\Lambda$ by \cite{GW3}.   

For Case (7),  $\pi=\pi_1(K)\times \mathbb Z$ where $K$ is the Klein bottle. This group is known to have the $R_{\infty}$-property (see  \cite[Theorem 2.4]{GW2}). For Cases (9) and (10), $\pi\cong \pi_1(K)\rtimes \mathbb Z$ where 
$\pi_1(K)$ is characteristic. For any $\varphi\in {\rm Aut}(\pi)$, the induced automorphism $\bar \varphi$ is either $id_{\mathbb Z}$ or $-id_{\mathbb Z}$. The former case yields $R(\varphi)=\infty$. In the later case, the set of twisted conjugacy classes of $\varphi'$ injects into the set of twisted conjugacy classes of $\varphi$, i.e., $\mathcal R(\varphi') \rightarrowtail \mathcal R(\varphi)$. Since $\pi_1(K)$ has the $R_{\infty}$-property, it follows that $R(\varphi)=\infty$.

For Cases (2), (3), (8)  we consider $\bar \pi=\Phi$ and 
$\Gamma=\mathbb Z^3$.  For Case (8), $\Phi=\mathbb Z_2$ and for Case (3), $\Phi=\mathbb Z_3$. 
In each of these two cases, we use the exact sequence (1). One can find a representative $g\in \pi$ of a suitable 
element of $\Phi$ such that 
$R(\iota_g\circ \varphi')=\infty$  (see \cite[\S 3.3 and  \S 4.3] {GWZ1}).   Since $\Gamma$ is characteristic and since $\Phi$ is finite,
   it follows that  $R(\varphi)=R(\iota_g\circ \varphi)=\infty$.  For Case (2), an explicit automorphism $\varphi$ was contructed with $R(\varphi)<\infty$ (see \S 7 of \cite{GWZ1}).

For the Case (6), consider an automorphism $\varphi$ with restriction $\varphi'$ on $\Gamma$ of type II and 
IV'
from \cite[Table 4.1]{GWZ1}. One can write down all the other three automorphisms using $\theta_1, \theta_2, \theta_3$ given few lines below the table. All such lifts have Reidemeister number finite for suitable values of $\epsilon, r,s ,b$. 
Choosing $\epsilon=1, r=1, s=a=0$ in the notation of \cite[Table 4.1,\S4.2]{GWZ1} we obtain 
that $\phi'=\begin{pmatrix}0 & 1&0\\0&0&-1\\1 &0&0 \end{pmatrix} $ so that $R(\phi')=|\det(I-\phi')|=|\det (I-\theta_i\phi')|=R(\theta_i\phi')=2$.  
Now by the addition formula, namely, \cite[Lemma 2.1]{GW1}, 
it follows that $R(\varphi)<\infty$.  Moreover, this group in Case (6) is the classical Hantzsche-Wendt group which is known not to have the $R_\infty$-property (see \cite{DDP}). 
   In \cite{GW5}, 
   all
    crystallographic groups of rank $2$ are classified in terms of the $R_{\infty}$-property.  Similarly, the result in \cite{De2} includes the full  classification of all crystallographic groups of rank $3$ in terms of the $R_{\infty}$-property, which certainly include the ten $3$-dimensional flat manifolds. So the result can be obtained from \cite{De2} after  identifying explicitly the flat $3$-manifolds as described in \cite{GWZ1} with the   ones as described in \cite{De2}. \\



\section{Geometries $S^2\times \mathbb R$ and ${\rm Sol}$}

\subsection{$S^2\times \mathbb R$-geometry}

In this case let us first analyze which of the fundamental groups have the $R_{\infty}$-property and which of the spaces have the   $R_{\infty}$-property. For the first question, the groups involved are: \\
(a) the fundamental group  of $S^2\times S^1$, which is $\Z$, and does not have the $R_{\infty}$-property;\\
(b) the fundamental group  of $\R P^2\times S^1$, which is $\Z_2\times \Z$,  and does not have the $R_{\infty}$-property; \\
(c) the fundamental group of $S^2 \tilde\times S^1$, which is $\Z$,   and does not have the $R_{\infty}$-property;\\
(d) the fundamental group of  $\R P^3\#\R P^3$, which  is $D_{\infty}=\Z_2*\Z_2$, and it has the $R_{\infty}$-property. 

Now we consider the question at the level of spaces. Certainly, since $\pi_1(\R P^3\#\R P^3)$ has the  
$R_{\infty}$-property, the manifold $\mathbb RP^3\#\mathbb RP^3$ has the $R_\infty$-property.

For the other three manifolds, $M=S^2\times S^1, \mathbb RP^2\times S^1, $ and $S^2\tilde{\times} S^1$, observe that 
in all these cases, they are total spaces of fibre bundles over $S^1$.  In each case, it is easy to construct a fibre preserving 
map $f:M\to M$ which induces the reflection map on the base space $ S^1$.  This  implies 
that the induced map $f_\#$ has finite Reidemeister number.

  For more details and further results  about the Nielsen theory of selfmaps  on such manifolds see   \cite{GWZ2}. \\


\subsection{Sol-geometry}    Let $M$ be a Sol $3$-manifold.  Then  $M$ is one of the two types:\\
(a) a  mapping torus of a self-homeomorphism $f:
T\to T$
of the torus $T=S^1\times S^1$ which induces in $\pi_1(T)=\mathbb Z^2$ an automorphism given by 
an Anosov matrix $A\in GL(2,\Z)$,\\
(b) the union of two twisted $I$-bundles over the Klein bottle glued along their common boundaries, which are tori, via 
an Anosov diffeomorphism. 
Such a manifold is also known as a sapphire manifold.

\subsection*{The case of a torus bundle}

   The homeomorphism type of $M$ is determined by the conjugacy class  of 
$A$ in $GL(2,\mathbb Z)$.    
Thus $M$ fibres over a circle with fibre $T$ and we have an isomorphism $G:=\pi_1(M)=\mathbb Z^2 \rtimes_A\mathbb Z$.    It turns out that the normal subgroup 
$N:=\mathbb Z^2\subset G$ is characteristic (see \cite[Lemma 2.1]{GW1}).    There are several cases to 
consider depending on the conjugacy class of $A$. Note that since $A$ is Anosov, (i.e., $|Tr(A)|>2, \det A=\pm 1$), 
the eigenvlaues of $A$ are real and neither of them equals $\pm 1$. In particular $A$ has infinite order.  

Case (a):  If $\det A=-1$, it was shown in \cite{GW4} that any automorphism of $G$ induces the identity map of the quotient 
$G/N=\mathbb Z$. Therefore $G$ has the $R_{\infty}$-property in this case. 

Case (b): We now assume that $\det A=1$.  
Examples of $A$ such that  $G=\mathbb Z^2\rtimes_A\mathbb Z$ does not have the $R_{\infty}$-property 
were given in  \cite[Example 4.3]{GW1}.   It was shown that when 
\begin{equation}
A=\begin{pmatrix}
  k^2+1 & k \\
  k & 1 
\end{pmatrix},
\end{equation}
$G$ does not have the $R_\infty$-property.  

A necessary condition for an automorphism $\phi:G\to G$ to have finite Reidemeister number is that the induced automorphism $\bar \phi$ on $G/N\cong \Z$ equals $-id$. 
Let $S$ be the matrix of $\phi|_N$.   Then $\bar \phi=-id$ if and only if $SA=A^{-1}S$. 
 Conversely, if $S\in GL(2,\mathbb Z)$ is such that $SAS^{-1}=A^{-1}$, then we obtain an automorphism $\phi$ of $G$ such that $\phi|_N$ is given by $S$ and $\bar{\phi}=-id$.  
{\it  In particular, if $A$ and $A^{-1}$ are not conjugates in $GL(2,\mathbb Z)$, then $G$ has the $R_\infty$-property.
}

From \cite[Proposition 5,8, Theorem 5.9]{GM1} we obtain that, if $A, A^{-1}\in SL(2,\mathbb Z)$ are conjugates 
in $GL(2,\mathbb Z)$, then
$A$ must be 
conjugate to a matrix of the form $A_0, B_0,$ or $C_0$ where

\begin{equation}\label{gluing_mapA0}
A_0=\begin{pmatrix}
  r & s \\
  s & u 
\end{pmatrix},
B_0=\begin{pmatrix}
  r & s \\
  t & r 
\end{pmatrix},
\textrm{~and~} 
C_0=\begin{pmatrix}
  r & s \\
  u-r & u 
\end{pmatrix}.
\end{equation}
We remark that an $A\in SL(2,\mathbb Z)$ may be conjugate to more than one of the above three types.

{\bf  Type $A_0$}:  {\it If  $A$ is conjugate to a matrix of the form $A_0$ in Equation \eqref{gluing_mapA0}, then 
there is an automorphism of the group $G=\pi_1(M) $ which has finite Reidemeister number.}

\begin{proof}   We may (and do) assume that $A=A_0$.  
 Consider the automorphism of $N=\mathbb Z^2$ 
given by the matrix   
$J=\bigl(\begin{smallmatrix}
  0 & -1 \\
  1 & 0 
\end{smallmatrix}\bigr)$.  
Since $JA_0J^{-1}=A_0^{-1}$,  $J$ extends to an automorphism $\phi$ of the group $G=N\rtimes_{A_0}\mathbb Z$.  
Then $\phi$ induces $-id$ on the quotient $G/N=\mathbb Z$.  The Reidemeister number $R(\phi)$ can then be 
calculated using a certain {\it addition formula} (see \cite[Lemma 2.1]{GW1}).   In our context, we obtain 
$R(\phi)=R(J)+R(JA)$.  Since for any $S\in GL(2,\mathbb Z)$ we have 
$R(S)= |\det(I-S)|$ when $\det(S)$ is non-zero, we obtain that $R(\phi)= |\det(I-J)|+|\det(I-A_0J)|=4$.  
\end{proof}

It was shown in \cite[Theorem 2.2]{GW4} that the Nielsen number of any homeomorphism of $M$ (where the gluing torus homeomorphism corresponds to $A_0$) is equal to either $0$ or $4$ and that both possibilities do occur.

We shall now treat simultaneously the remaining cases when $A$ is of type $B_0$ or $C_0$. 
 Recall that $\det(A)=1$.  We say that an element $A_1\in GL(2,\mathbb Z)$ is a {\it primitive root} of $A$ if 
$\delta A=A_1^m$ 
with $m\ge 1$ maximum and $\delta\in \{1,-1\}$.

{\bf Types $B_0 $ and $C_0$:}  {\it Let 
$A$ be  conjugate to an Anosov matrix of the form $B_0$ or $C_0$ in Equation \eqref{gluing_mapA0}.  
Then:  the  group $G=\pi_1(M)=\mathbb Z^2\rtimes_A \mathbb Z$  has the $R_{\infty}$-property if and only if any primitive root of $A$ has determinant $+1$.}\\
\noindent
 {\it Proof.}
We assume, as we may,  that $A=\bigl( \begin{smallmatrix}  r & s\\t& r  \end{smallmatrix} \bigr)$ when it is of type $B_0$  and $A=\bigl( \begin{smallmatrix}  r & s\\u-r& r  \end{smallmatrix} \bigr)$ when it is of type 
$C_0$.   Also, since the case when $A$ is of type $A_0$ had already been considered, 
we assume that $t\ne s$.

Let $\phi\in Aut(G)$. Let $S\in GL(2,\mathbb Z)$ be the matrix of the automorphism $\phi|_N$, where $N= \mathbb Z^2$.  Recall that $N$ is characteristic in $G$.  
 If $\bar{\phi}$ induces the identity on $G/N\cong \mathbb Z$, then $R(\phi)=\infty$. So assume that 
$\bar\phi=-id$.   Then $S$ satisfies the equation $SAS^{-1}=A^{-1}$.

Consider the group $K(A)=\{X\in SL(2,\mathbb Z)\mid XAX^{-1}=\delta A, \delta \in \{1,-1\}\}$. 
Note that the centralizer $Z(A)\subset SL(2,\mathbb Z)$ is subgroup of $K(A)$ of index at most $2$.
Since $A\in SL(2,\mathbb Z)$ is Anosov, it follows that 
its centralizer $Z(A)\subset GL(2,\mathbb Z)$ is virtually  infinite cyclic.  In fact, the image of $K(A)$ 
in $PSL(2,\mathbb Z)=SL(2,\mathbb Z)/\{I_2,-I_2\}$ under the natural projection equals the centralizer $Z(\bar A)$ where 
 $\bar A:=\{\pm A\}\in PSL(2,\mathbb Z)$.   Using the length function associated to the free product 
$PSL(2,\mathbb Z)\cong \mathbb Z_2*\mathbb Z_3$, we see that $\bar A$ has a {\it unique} primitive 
root $\bar A_1$ in $PSL(2,\mathbb Z)$. That is,  there exists a unique $ \bar A_1\in PSL(2,\mathbb Z)$ such that  
$\bar A=\bar A_1^k$ with $k\ge 1$ largest.  Moreover $Z(\bar A)$ equals $\langle \bar A_1\rangle \cong \mathbb Z$. 
Then $A_1\in K(A)$ is such that $A_1^k=\delta _0A$, $\delta_0\in \{1,-1\}$.  The only other element in $SL(2,\mathbb Z)$ to have this 
property in $K(A)$ is $-A_1$.    Further, $K(A)$ equals $gp\langle A_1, -I_2\rangle \cong \mathbb Z\times \mathbb Z_2$.

\noindent {\it ``If" part:}  Suppose that every primitive root $A'$  of $A$ in $GL(2,\mathbb Z)$ has determinant $+1$.   Then $A'=A_1 $ or $-A_1$.  
We set $S_0:=\bigl(\begin{smallmatrix} 1&0\\0&-1\end{smallmatrix}\bigr)$ when $A$ is of type $B_0$ 
and $S_0:=\bigl(\begin{smallmatrix} 1&1\\0&-1\end{smallmatrix}\bigr)$ when $A$ is of type $C_0$.  Note that 
$S_0^2=I_2$ and $S_0AS_0^{-1}=A^{-1}$ in each type.  We shall show that the same holds for the primitive root 
$A_1$.

Suppose that $YAY^{-1}=\delta A^\epsilon , \delta,\epsilon \in \{1,-1\}$.  Then $S_0Y. A.Y^{-1}S_0^{-1}=\delta S_0A^\epsilon S_0^{-1}=\delta A^{-\epsilon}$. 
Hence the group $N(A):=\{S\in GL(2,\mathbb Z)\mid SAS^{-1}=\delta A^{\epsilon},~\delta,\epsilon
\in \{1,-1\}\}$ contains $K(A)$ as an index-$2$ subgroup since $A_1$ is a primitive root of $A$ in $GL(2,\mathbb Z)$. 
  Moreover, since $\pm A_1$ are the only primitive roots of $A$ in $GL(2,\mathbb Z)$, we have  
$N(A)=K(A)\rtimes \langle S_0\rangle $ and so, any 
element of $N(A)$ can be expressed as $\epsilon A_1^pS_0^j, \epsilon \in \{1,-1\}, p\in \mathbb Z, j=0,1.$   Thus $N(A)$ 
acts on $K(A)$ via conjugation and also on $Z(\bar {A})$.  In view of the uniqueness of the primitive root of 
$\bar A$, we see that, $SA_1S^{-1}=\delta_1 A_1^\epsilon$  for all $S\in N(A)$, where 
$\delta_1,\epsilon \in \{1,-1\}$.  We have $A^{-1}= S_0 AS_0^{-1}=S_0A_1^kS_0^{-1}=\delta^k _1A^\epsilon$.   
This implies 
that $\epsilon =-1$ and $\delta^k_1=1$.  If $k$ is odd, we have $\delta_1=1$ and $S_0A_1S_0^{-1}=A_1^{-1}$ and so 
$A_1$ is of type $B_0$ (resp. $C_0$) depending on the matrix $S_0$.  Since $A_1^k=\delta_0 A$, if $k$ is odd, we replace $A_1$ by $-A_1$ 
so that $(-A_1)^k=A$ resulting in $\delta_0=1$.  Again $-A$ is of type $B_0$ (resp. type $C_0$) and the 
same holds for $A_1$ as well.   It can be shown (by induction) that if $X$ is of type $B_0$ (resp. $C_0$), the same is 
true of $X^p$ for all non-zero integers $p$.  Therefore if  $SAS^{-1}=A^{-1}$, then $S, AS \in N(A)$ are of the form 
$\eta A_1^q S_0$ for some $\eta\in \{1, -1\}, q\in \mathbb Z$.


To complete the proof that $R(\phi)=\infty$, 
we now apply the addition formula $R(\phi)=R(S)+R(AS)$ where $SAS^{-1}=A^{-1}$.  First consider the 
case when $A$ is of type $B_0$.  We have $S=\eta A_1^p S_0, \eta\in \{1.-1\}$ where  
$S_0:=\bigl(\begin{smallmatrix} 1&0\\0&-1\end{smallmatrix}\bigr)$.  If $p=0$, then $S=\eta S_0$ and $I-\eta S_0$ 
is singular. So $R(S)=\infty$.  
Suppose that $p\ne 0$.  Write $A_1=
\bigl(\begin{smallmatrix} x&y\\ z & x\end{smallmatrix}\bigr)\in SL(2,\mathbb Z).$  Then $S=\eta \bigl(\begin{smallmatrix}  x&-y\\ z &-x \end{smallmatrix}\bigr)$.    So $\det(I_2-S)=   1-(x^2-yz)=0$ since $A_1\in SL(2,\mathbb Z)$.    Therefore $R(\phi)=\infty$. 
The same argument applies in the case of Type $C_0$ to yield $R(S)=\infty$ and so $R(\phi)=\infty$.

{\it ``Only if" part:} Suppose that $\delta A=X^m_0$ for some $X_0\in GL(2,\mathbb Z), m\ge 1,$ with $\det(X_0)=-1$, 
$\delta\in \{1,-1\}$. Then $m$ is even;     
write $m=2n$.  
Let $X_1=X^2_0$ so that $X _1^n=\delta A$.  By what has been shown already, $X_1$ is of type $B_0$ or $C_0$.  We claim 
that $X_0$ is of the same type as $A$---$B_0$ or $C_0$.    To see this, write $X_0=\bigl(\begin{smallmatrix} 
p & q \\ r &s
\end{smallmatrix}\bigr)$.  Then $X_1^2=\bigl(\begin{smallmatrix} 
p^2+qr & q(p+s) \\ r(p+s) &s^2+qr
\end{smallmatrix}\bigr)$.  If $A$ is of type $B_0$, then we must have $p^2=q^2$.  If $p=-q$, then $X_1$---and hence $A$---would be diagonal. So $p=q$ and $X_0$ is of type $B_0$.  The proof that $X_0$ is of type $C_0$ when $A$ is, is 
similar and omitted.  

Consider the automorphism $\phi$ of $G$ whose restriction to $N=\mathbb Z^2$ is given by $S=S_0X_0$.  
Then $SAS^{-1}= S_0 A S_0^{-1}=A^{-1}$.  So $\bar \phi=-id$.    Using the addition formula 
we obtain that $R(\phi)=R(S)+R(AS)$.  Proceeding as before, we see that $R(S)=2=R(AS)$.  

 In summary, we have shown that:
 {\it The group $\pi_1(M)=G=\mathbb Z^2\rtimes_A\mathbb Z$ has the $R_\infty$-property if (i)  $\det A=-1$, (ii) $A\in SL(2,\mathbb Z)$ is not conjugate to $A^{-1}$, and, (iii) $A\in SL(2,\mathbb Z)$ is of the type 
 $B_0$ or $C_0$, and, neither $A$ nor $-A$ is in the cyclic group generated by an element 
of $GL(2,\mathbb Z)$ having determinant equal to $-1$.    If $A\in SL(2,\mathbb Z)$ is of type $A_0$,  then 
$\pi_1(M)$ does not have the $R_\infty$-property.}

This completes the proof of part (d) of Main Theorem for torus bundles case.





\subsection*{The case of a sapphire manifold}  Recall that a {\it sapphire} is a $3$-manifold obtained from two orientable $3$-manifolds which are 
twisted $I$-bundles over a Klein bottle glued along their boundary tori. A sapphire which is not a torus bundle over the circle admits Sol-geometry 
when the gluing map is an Anosov homeomorphism.   If $M$ is a sapphire which is not a torus bundle, it is double 
covered by a torus bundle $\tilde M$ with fundamental group $L:=\mathbb Z^2\rtimes_A \mathbb Z$ where $A$ is a hyperbolic 
matrix in $\SL(2,\mathbb Z)$.    Moreover the index $2$ subgroup $L$ is characteristic 
in $G:=\pi_1(M)$ (see \cite[Lemma 3.1]{GW4}).   So, any automorphism $\phi$ of $G$ restricts to an automorphism $\phi'$ of $L$.
It follows that $R(\phi)=\infty$ if $R(\phi')=\infty$.  
  If $\phi'$ 
induces $id$ on the quotient $L/N\cong \mathbb Z$ where $N\cong \mathbb Z^2$ is the characteristic subgroup of $L$ corresponding to the 
fundamental group of the torus fibre in $\tilde M$, then $R(\phi')=\infty$.   In case $\phi'$ induces $-id$, choose an element $\alpha\in G\setminus  L$.   Denote by $\iota_\alpha$ the inner conjugation by $\alpha$.  
Then the automorphism $\iota_\alpha\circ \phi=:\psi$ has the same Reidemeister number as $\phi$.  Moreover, 
using the fact that $M$ does not admit an orientation reversing homeomorphism, it can be shown that, when $\phi'$ induces $-id$ on $L/N$, then 
$\iota_\alpha\circ \phi'$ induces $id$ on $\phi$.  Again we are led to the conclusion that $R(\phi)=\infty$.  See \cite[Theorems 3.4 and  4.2]{GW4} 
for a more geometric proof.

\section{Geometries $\mathbb H^2\times \mathbb R$ and $\widetilde {SL(2,\mathbb R)}$}

In this section, we focus on those geometric $3$-manifolds that are finitely covered by $3$-manifolds that are $S^1$-bundles over hyperbolic surfaces.

Let $\Gamma=\pi_1(M)$ where $M$ admits either a $\mathbb H^2\times \mathbb R$-geometry or an 
$\widetilde{SL}(2,\mathbb R)$-geometry.  Then $M$ admits a finite cover $\hat{M}\to M$ such that 
$\hat{M}$ fibres over an orientable finite volume hyperbolic surface $\Sigma$ with fibre $S^1$. Thus $\chi(\Sigma)<0$ 
and $\Sigma$ is compact if and only if $M$ is. 
 In the case of 
$\mathbb H^2\times \mathbb R$-geometry, the $S^1$-bundle may be assumed to be the 
product bundle $\Sigma\times S^1$.

 Thus we have an exact sequence  
\[1\to Z\to \Lambda\stackrel{\eta}{\to} \pi_1(\Sigma)\to 1\eqno(2) \]
where $Z=\pi_1(S^1)\cong \mathbb Z$.  In the case of $\mathbb H^2\times \mathbb R$-geometry, 
since $\hat{M}\cong \Sigma\times S^1$, $Z$ equals the centre of $\Lambda$.  

Suppose that $M$ has the $\widetilde{SL(2,\mathbb R)}$-geometry.   Let $\mathcal Z$ denote the centre of 
$\Lambda$.   Since $\pi_1(\Sigma)$ is a nonabelian free group or a higher genus surface group, its centre is trivial. 
It follows that $\mathcal Z\subset Z$. 
 We claim that $\mathcal Z$ is non-trivial and hence infinite cyclic.  To get a contradiction, suppose that 
 $\mathcal Z$ is trivial.  Then $\Lambda$ 
 maps isomorphically onto its image under the projection $p:\widetilde{SL(2,\mathbb R)}\to PSL(2,\mathbb R)$ in 
view of the fact that $\ker(p)\cong \mathbb Z$ is the centre of $\widetilde{SL(2,\mathbb R)}$. 
So 
$\hat{M}=\widetilde{SL(2,\mathbb R)}/\Lambda\to PSL(2,\mathbb R)/p(\Lambda)$ is an infinite covering projection. 
This contradicts the finiteness of the volume of $\hat{M}$. Hence our claim.

\begin{theorem}
Let $M$ be a $3$-manifold which admits a $\mathbb H^2\times \mathbb R$-geometry or $\widetilde{SL(2,\mathbb R)}$-geometry.  Then $\pi_1(M)$ has the $R_\infty$-property.
\end{theorem}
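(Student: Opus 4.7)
The plan is to reduce the problem in two stages, ultimately to the known $R_\infty$-property of a non-elementary finite-covolume Fuchsian group.

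First, I would replace $\pi_1(M)$ by a finite-index characteristic subgroup. Since $\pi_1(M)$ is finitely generated, it has only finitely many subgroups of any given finite index, and their intersection is characteristic. Replacing $\hat M$ by the further finite cover corresponding to this intersection---still of the same geometric type, and still an $S^1$-bundle over a finite-volume orientable hyperbolic surface---I may assume $\Lambda = \pi_1(\hat M)$ is characteristic in $\pi_1(M)$. Then for any $\varphi\in \aut(\pi_1(M))$, the restriction $\varphi|_\Lambda\in \aut(\Lambda)$ is well-defined, and the inclusion $\Lambda\hookrightarrow \pi_1(M)$ induces a map $\mathcal R(\varphi|_\Lambda)\to \mathcal R(\varphi)$ with fibres of cardinality at most $[\pi_1(M):\Lambda]$; hence it suffices to show that $\Lambda$ has the $R_\infty$-property.

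Next, I would pass to the quotient by the centre $\mathcal Z$ of $\Lambda$, which is always characteristic. In the $\mathbb H^2\times \mathbb R$ case, $\mathcal Z = Z$ and $\Lambda/\mathcal Z = \pi_1(\Sigma)$. In the $\widetilde{SL(2,\mathbb R)}$ case, the analysis immediately preceding the theorem identifies $\mathcal Z$ with $\Lambda\cap\ker p$ and shows it to be infinite cyclic, where $p:\widetilde{SL(2,\mathbb R)}\to PSL(2,\mathbb R)$ is the natural projection; consequently $\Lambda/\mathcal Z \cong p(\Lambda)$ is a discrete subgroup of $PSL(2,\mathbb R)$, and because the induced covering $\hat M \to \mathbb H^2/p(\Lambda)$ has finite degree $[Z:\mathcal Z]$, $p(\Lambda)$ is a finite-covolume Fuchsian group. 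It is non-elementary because it surjects (with finite kernel $Z/\mathcal Z$) onto $\pi_1(\Sigma)$, which has $\chi(\Sigma)<0$.

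In every case, $\Lambda/\mathcal Z$ is therefore a non-elementary Fuchsian group of finite covolume; such a group is word-hyperbolic when cocompact and relatively hyperbolic with respect to its cusp subgroups otherwise, so the $R_\infty$-property follows from \cite{LL} in the cocompact case and from \cite{fel} in the non-cocompact one. Since $\mathcal Z$ is characteristic, every $\varphi\in \aut(\Lambda)$ descends to $\bar\varphi\in \aut(\Lambda/\mathcal Z)$, and the quotient map $\Lambda\twoheadrightarrow \Lambda/\mathcal Z$ induces a surjection $\mathcal R(\varphi)\twoheadrightarrow \mathcal R(\bar\varphi)$; the infinitude $R(\bar\varphi)=\infty$ then forces $R(\varphi)=\infty$, completing the argument. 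I expect the main obstacle to be the verification that $\Lambda/\mathcal Z$ is genuinely a non-elementary finite-covolume Fuchsian group in the $\widetilde{SL(2,\mathbb R)}$ case, which depends essentially on the non-triviality of $\mathcal Z$ established earlier via the finite-volume hypothesis on $\hat M$. The remaining bookkeeping---ensuring that each invoked subgroup is characteristic---is handled routinely by the intersection-of-finite-index-subgroups trick.
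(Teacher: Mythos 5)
Your proposal is correct and follows essentially the same route as the paper: pass to a finite-index characteristic subgroup (via the intersection-of-subgroups-of-given-index trick), quotient by its infinite cyclic centre using the non-triviality established from the finite-volume hypothesis, and conclude from the $R_\infty$-property of the resulting non-elementary (relatively) hyperbolic quotient via \cite{LL} and \cite{fel}, with the two standard comparison maps on Reidemeister classes. The only (harmless) differences are that you perform the two reductions in the opposite order and identify the central quotient as the finite-covolume Fuchsian group $p(\Lambda)$ rather than as a free or closed-surface group.
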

\begin{proof}
By the above discussion, the group $\Gamma=\pi_1(M)$ has a finite index subgroup $\Gamma_0$ whose centre 
$\mathcal Z$ is an infinite cyclic group and $\Gamma_0/\mathcal Z$ is either a nonabelian free group of finite rank 
or the fundamental group of a closed surface $\Sigma_0$ of genus $g\ge 2$.   Thus $\Gamma_0/\mathcal Z$ 
is a non-elementary hyperbolic group and so has the $R_\infty$-property.   Hence, it follows from 
\cite[Lemma 1.1]{GW2} that $\Gamma_0$ has the $R_\infty$-property.  The same argument shows that 
any finite index subgroup of $\Gamma_0$ also has the $R_\infty$-property.  

Since $\Gamma$ is finitely generated and since $\Gamma_0$ has finite index in $\Gamma$, it follows that 
there is a finite index subgroup $K\subset \Gamma_0$ that is characteristic in $\Gamma$. (For example, 
we may take $K$ to be the intersection of all subgroups of $\Gamma$ having index equal to the index of $\Gamma_0$ 
in $\Gamma$.)  Then $K$ has the $R_\infty$-property and so, by \cite[Lemma 1.1]{GW2}, $\Gamma$ also 
has the $R_\infty$-property.   
\end{proof}

\medskip

\section{Nil-geometry} 

The closed $3-$manifolds which admit Nil-geometry are listed as 
the infranilmanifolds $M$ of dimension $3$,  following Dekimpe  \cite[Theorem 6.5.5, Chapter 6, p. 154]{De1}.   The last column denotes the holonomy group $F$. The group $F$ may be described as the $\pi_1(M)/\Gamma$ where 
$\Gamma$ is the unique maximal nilpotent normal subgroup of $\pi_1(M)$.   The manifold $M$ is then covered 
by the compact nilmanifold $\tilde M$ with covering group $F$.  Since $M$ has Nil-geometry, it is understood that $\Gamma$ is not abelian; equivalently $\tilde M$ is not the torus.  

\begin{center}
\begin{tabular}{|c||c|c|}
\hline
Type & Set of Seifert Invariants& $F$\\
\hline\hline
i &$M_1(k)=\{k,(o_1,1);\}$&1\\
\hline
ii &$M_2(k)=\{k-2,(o_1,0);(2,1),(2,1),(2,1),(2,1)\}$&$\Z_2$\\
\hline
iii &$M_3(k)=\{k,(n_2,2);$\}&$\Z_2$\\
\hline
iv &$M_4(k)=\{k-1,(n_2,1);(2,1),(2,1)\}$&$\Z_2\times\Z_2$\\
\hline
{v}& $M_5(k)=\{k-2,(o_1,0);(4,3),(4,3),(2,1)\}$&
{$\Z_4$}\\
&$M_6(k)=\{k-1,(o_1,0);(4,1),(4,1),(2,1)\}$&\\
&$M_7(k)=\{k-2,(o_1,0);(4,3),(4,1),(2,1)\}$&\\
\hline
{vi}& $M_8(k)=\{k-2,(o_1,0);(3,2),(3,2),(3,2)\}$&
{$\Z_3$}\\
&$M_9(k)=\{k-1,(o_1,0);(3,1),(3,1),(3,1)\}$&\\
&$M_{10}(k)=\{k-2,(o_1,0);(3,2),(3,1),(3,1)\}$&\\
&$M_{11}(k)=\{k-2,(o_1,0);(3,2),(3,2),(3,1)\}$&\\
\hline
{vii}& $M_{12}(k)=\{k-2,(o_1,0);(6,5),(3,2),(2,1)\}$&
{$\Z_6$}\\
&$M_{13}(k)=\{k-1,(o_1,0);(6,1),(3,1),(2,1)\}$&\\
&$M_{14}(k)=\{k-2,(o_1,0);(6,1),(3,2),(2,1)\}$&\\
&$M_{15}(k)=\{k-2,(o_1,0);(6,5),(3,1),(2,1)\}$&\\
\hline
\end{tabular}
\end{center}

\medskip
In this table the integer $k$ is assumed to be strictly bigger than $0$. The case of manifolds with 
the same Seifert invariants but with $k\leq 0$ are either flat manifolds  or they represent  manifolds which 
are  already homeomorphic to one  in the Table for $k>0$. When the manifold is not flat, one passes from one   
family of invariants with $k<0$ to another with $k>0$ by changing the orientation (see \cite{Se}).

The complete answer to the question about $R_{\infty}$-property of the manifolds above is given by \cite[Theorem 4.4]{De2} which in terms of the 
table above says: A closed infranil-manifold has the $R_{\infty}$-property, if and only if it does not belong to the first two lines. Furthermore, 
for the cases of the manifolds of types (i) and (ii), namely the ones which do not have the $R_{\infty}$-property, the 
Reidemeister spectrum is described in \cite[section 5]{Te}.


\begin{remark} The fundamental groups of the Seifert manifolds have a presentation given by \cite[Chapter 5 section 5.3]{Or}. It follows from such presentation that we have 
 a short exact sequence 
${\displaystyle 1 \rightarrow  \langle h\rangle \rightarrow \pi _{1}(M)\rightarrow \pi _{1}(B)\rightarrow 1} $, where $h$ is a generator which corresponds to the regular fibre. 
From  \cite[Chapter 5, \S 5.3, Lemma 1]{Or}, it follows that this short exact sequence is characteristic with respect to automorphisms of $\pi_1(M)$. 
The results obtained for the 3-manifolds may help to study 
the $R_{\infty}$-property for the groups $\pi_1(B)$.  The groups $\pi_1(B)$ are most often 
Fuchsian groups.  
 \end{remark}

 \section{Appendix}

The purpose of the appendix is to show that the notions of a space $X$ having the $R_{\infty}$-property and the group $\pi_1(X)$ having the
$R_{\infty}$-property are not equivalent.

Certainly if $\pi_1(X)$ has the $R_{\infty}$-property, then the space
$X$ has the $R_{\infty}$-property. We now provide an example where $X$ has the $R_{\infty}$-property but
its fundamental group $\pi_1(X)$ does not have.

Let $Y=S^3\times S^3$ and $h: Y \to Y$ a homeomorphism  which induces on the homology group $H_3(S^3\times S^3; \Z)$
a homomorphism given by a matrix

\begin{equation}\label{gluing_map}
A=\begin{pmatrix}
	r & s \\
	t & u
\end{pmatrix}
\end{equation}
in $GL(2, \Z)$.

Let $X$ be the mapping torus of the  homeomorphism $h$ of $S^3\times S^3$, so 
$X$ fibres over the  circle $S^1$ with fibre   $S^3\times S^3$. Let $f:X \to X$ be a homotopy equivalence. Then 
the map $f$ can be deformed to a fibre-preserving map. We assume that $f$ itself is fibre preserving.
The induced map $\bar f$ on the base $S^1$ is either of degree $1$ or $-1$. The induced isomorphism on $H_3(X)\cong \Z^2$ is a matrix $B$ such that: If the degree of
$\bar f$ is $-1$ then we must have
$BAB^{-1}=A^{-1}$. Let $A$ be a matrix where such $B$ does not exist (see \S3.2 above). Then any homotopy equivalence will induce a map of degree $1$ on $S^1$ and hence identity on $\pi_1(X)$. So
$R(f)={\infty}$, but certainly $\Z=\pi_1(X)=\pi_1(S^1)$ does not have the $R_{\infty}$-property. So it suffices to choose  a matrix $A$ where $B$ does not exist and determine a homeomorphism $h_{A}$.
We can take $A$ any matrix such that $\det A=-1$ or one of the matrices from \cite[Example 4.4]{GW1}. In details, for the former case let 
\begin{equation}\label{gluing_mapI}
A=\begin{pmatrix}
	1 & 1 \\
	2 & 1
\end{pmatrix}
\end{equation}
so that 
\begin{equation}\label{gluing_mapI}
A^{-1}=\begin{pmatrix}
	-1 & 1 \\
	2 & -1
\end{pmatrix}.
\end{equation}

Define $h_A(q_1,q_2)=(q_1q_2, q_1^2q_2))$ and  $h_{A^{-1}}(q_1,q_2)=(q_2q_1^{-1}, q_1q_2^{-1}q_1)$.\\
It is straightforward to verify that $h_{A}\circ h_{A^{-1}}=h_{A^{-1}}\circ h_{A}=Id_{S^3\times S^3}$. So we can apply our construction for 
$h_{A}$ and we obtain the result.

For a matrix in the latter case (as in \cite[Example 4.4]{GW1}), let 
\begin{equation}\label{gluing_mapI}
A=\begin{pmatrix}
	4 & 1 \\
	3 & 1
\end{pmatrix}
\end{equation}
so that 
\begin{equation}\label{gluing_mapI}
A^{-1}=\begin{pmatrix}
	1 & -1 \\
	-3 & 4
\end{pmatrix}.
\end{equation}

Define $h_A(q_1,q_2)=(q_1^4q_2, q_1^3q_2)$ and  $h_{A^{-1}}(q_1,q_2)=(q_1q_2^{-1}, q_2q_1^{-1}q_2q_1^{-1}q_2q_1^{-1}q_2)$.\\
It is straightforward to verify that $h_{A}\circ h_{A^{-1}}=h_{A^{-1}}\circ h_{A}=Id_{S^3\times S^3}$. So we can apply our construction for 
$h_{A}$ and we obtain the result.

\noindent
{\bf Acknowledgments:}  The authors grateful to the referee for his/her many valuable comments and for drawing our attention 
to some inaccuracies in an earlier version of this paper.

\end{document}